\newtheorem{theorem}{Theorem}[section]
\newtheorem{proposition}[theorem]{Proposition}
\newtheorem{lemma}[theorem]{Lemma}
\newtheorem{corollary}[theorem]{Corollary}
\theoremstyle{definition}
\newtheorem{definition}[theorem]{Definition}
\newtheorem{remark}[theorem]{Remark}
\numberwithin{equation}{section}
\newcommand{\A}{{\mathscr A}}
\newcommand{\C}{{\mathbb C}}
\newcommand{\N}{{\mathscr N}}
\renewcommand{\O}{{\mathscr O}}
\newcommand{\R}{{\mathbb R}}
\newcommand{\Aut}{{\operatorname{Aut}}}
\newcommand{\codim}{{\operatorname{codim}\,}}
\newcommand{\inv}{{^{-1}}}
\renewcommand{\sl}{{\operatorname{/\!\!/}}}
\newcommand{\pr}{{\operatorname{pr}}}
\newcommand{\ql}{{\operatorname{q\ell}}}
\newcommand{\lie}{\mathfrak}
\renewcommand{\phi}{\varphi}
\newcommand{\GL}{\operatorname{GL}}
\newcommand{\SL}{\operatorname{SL}}
\newcommand{\pt}{\partial}
\newcommand{\alg}{{\operatorname{alg}}}
\title{Sufficient conditions for holomorphic linearisation}
\author{Frank Kutzschebauch, Finnur L\'arusson, Gerald W.~Schwarz}
\address{Frank Kutzschebauch, Institute of Mathematics, University of Bern, Sidlerstrasse 5, CH-3012 Bern, Switzerland}
\email{frank.kutzschebauch@math.unibe.ch}
\address{Finnur L\'arusson, School of Mathematical Sciences, University of Adelaide, Adelaide SA 5005, Australia}
\email{finnur.larusson@adelaide.edu.au}
\address{Gerald W.~Schwarz, Department of Mathematics, Brandeis University, Waltham MA 02454-9110, USA}
\email{schwarz@brandeis.edu}
\subjclass[2010]{Primary 32M05.  Secondary 14L24, 14L30, 32E10, 32M17, 32Q28.}
\keywords{Oka principle, geometric invariant theory, Stein manifold, complex Lie group, reductive group, categorical quotient, Luna stratification, linearisable action, linearisation problem.}
\date{January 11, 2016}
\thanks{F.~Kutzschebauch was partially supported by Schweizerischer Nationalfond grant 200021-140235/1.  F.~L\'arusson was partially supported by Australian Research Council grants DP120104110 and DP150103442.  F.~ L\'arusson and G.~W.~Schwarz would like to thank the University of Bern for hospitality and financial support and F.~Kutzschebauch and G.~W.~Schwarz would like to thank the University of Adelaide for hospitality and the Australian Research Council for financial support.}
\begin{document}

\begin{abstract}  
Let $G$ be a reductive complex Lie group acting holomorphically on  $X=\C^n$. The (holomorphic) Linearisation Problem asks if there is a holomorphic change of coordinates on $\C^n$ such that the $G$-action becomes linear.  Equivalently,  is there  a $G$-equivariant  biholomorphism $\Phi\colon X\to V$ where $V$ is a $G$-module? 
There is an intrinsic stratification of the categorical quotient $Q_X$, called the Luna stratification, where the strata are labeled by   isomorphism classes of representations of reductive subgroups of $G$. Suppose that there is a $\Phi$ as above. Then $\Phi$ induces a biholomorphism $\phi\colon Q_X\to Q_V$ which is stratified, i.e., the stratum of $Q_X$ with a given label is sent isomorphically to the stratum of $Q_V$ with the same label.

  The counterexamples to the Linearisation Problem   construct an action of $G$ such that $Q_X$ is not stratified biholomorphic to any $Q_V$. Our main theorem shows that, for most $X$, a stratified biholomorphism of $Q_X$ to some $Q_V$ is \emph{sufficient\/} for linearisation. In fact, we do not have to assume that $X$ is biholomorphic to $\C^n$, only that $X$ is a Stein manifold. 
\end{abstract}

\maketitle
\tableofcontents

\section{Introduction}  \label{s:introduction}

\noindent

The problem of linearising the action  of a reductive group $G$ on $\C^n$ has attracted much attention both in the algebraic and holomorphic settings (\cite{Huckleberry},\cite{KorasRussell},\cite{ Kraft1996}).  First there was work in the algebraic category.   If $X$ is an affine $G$-variety, then the quotient is the affine variety $Q_X$ with coordinate ring $\O_\alg(X)^G$. 
An early high point is a consequence of Luna's slice theorem \cite{Luna}. Suppose that $Q_X$ is a point and that $X$ is contractible. Then $X$ is algebraically $G$-isomorphic to a $G$-module. The structure theorem for the group of algebraic automorphisms of $\C^2$ shows that any action on $\C^2$ is linearisable \cite[Section 5]{ Kraft1996}.  As a consequence of a long series of results by many people, it was finally shown in \cite{KraftRussell} that an effective action of a positive dimensional $G$ on $\C^3$ is linearisable. The case of finite groups acting on $\C^3$ remains open.

The first counterexamples to the algebraic linearisation problem were constructed by Schwarz \cite{Schwarz1989} for $n\geq 4$.  His examples came from negative solutions to the equivariant Serre problem, i.e.,   there are algebraic $G$-vector bundles with base a $G$-module which are not  isomorphic to the trivial ones (those of the form $\pr\colon W\times W'\to W$ where $G$ acts diagonally on the $G$-modules $W$ and $W'$). It is interesting to note that in these counterexamples, the nonlinearisable actions may have the same stratified quotient  as a $G$-module. 

By the equivariant Oka principle of Heinzner and Kutzschebauch \cite{Heinzner-Kutzschebauch}, all holomorphic $G$-vector bundles over a $G$-module are trivial. Thus the algebraic counterexamples to linearisation are not counterexamples in the holomorphic category. But Derksen and Kutzsche\-bauch \cite{Derksen-Kutzschebauch} showed that for $G$ nontrivial, there is an $N_G\in\mathbb N$ such that there are nonlinearisable holomorphic actions of $G$ on $\C^n$, for every $n\geq N_G$.  

Consider Stein $G$-manifolds $X$ and $Y$. There are the categorical quotients $Q_X$ and $Q_Y$. We have the Luna stratifications of $Q_X$ and $Q_Y$ labeled by isomorphism classes of representations of reductive subgroups of $G$ (see Section \ref{sec:background}). We say that a biholomorphism  $\phi\colon Q_X\to Q_Y$ is \emph{stratified\/} if it sends the Luna stratum of $Q_X$ with a given label to the Luna stratum of $Q_Y$ with the same label.  If $\Phi\colon X\to Y$ is a $G$-biholomorphism, then the induced mapping $\phi\colon Q_X\to Q_Y$ is stratified. The counterexamples of Derksen and Kutzschebauch are actions on $\C^n$ whose   quotients are not isomorphic, via a stratified biholomorphism,  to the  quotient of a linear action.  We will show that, 
under a mild assumption, this is  the only way to get a counterexample to linearisation.  

Suppose that we have a stratified biholomorphism $\phi\colon Q_X\to Q_V$ where $V$ is a $G$-module. Then we may identify $Q_X$ and $Q_V$ and call the common quotient $Q$. We have quotient mappings $p\colon X\to Q$ and $r\colon V\to Q$. Assume there is an open cover $\{U_i\}_{i\in I}$ of $Q$ and $G$-equivariant biholomorphisms $\Phi_i:p^{-1}(U_i)\to r^{-1}(U_i)$ over $U_i$ (meaning that $\Phi_i$ descends to the identity map of $U_i$).  We express the assumption by saying that $X$ and $V$ are \textit{locally $G$-biholomorphic over a common quotient}.   
Equivalently, our original $\phi\colon Q_X\to Q_V$ locally lifts to $G$-biholomorphisms of $X$ to $V$.

Our first main   result  is the following.

\begin{theorem}\label{thm:main1}
Suppose that $X$ is a Stein $G$-manifold, $V$ is a $G$-module and $X$ and $V$ are locally $G$-biholomorphic over a common quotient. Then $X$ and $V$ are $G$-biholomorphic.
\end{theorem}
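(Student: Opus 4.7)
The plan is to turn the problem into non-abelian \v{C}ech cohomology on the Stein space $Q$ and kill the relevant class by an Oka-type argument. Let $\A$ be the sheaf of groups on $Q$ whose sections over $U\subset Q$ are the $G$-equivariant holomorphic automorphisms of $r\inv(U)$ descending to the identity of $U$. The hypothesised local biholomorphisms $\Phi_i\colon p\inv(U_i)\to r\inv(U_i)$ produce a \v{C}ech $1$-cocycle $\Phi_{ij} := \Phi_i\circ\Phi_j\inv \in \A(U_i\cap U_j)$, and a global $G$-biholomorphism $\Phi\colon X\to V$ over $Q$ amounts to a choice of $\Psi_i \in \A(U_i)$ with $\Phi_{ij} = \Psi_i\inv\Psi_j$; the maps $\Psi_i\circ\Phi_i$ then agree on overlaps and glue to the sought $\Phi$. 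So the task is to split the cocycle, equivalently, to trivialise the associated $\A$-torsor on $Q$.

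To carry out the splitting, I would follow the standard linearise-and-iterate strategy of Oka theory. First, pass to the sheaf $\lie a$ of infinitesimal automorphisms of $\A$: namely, $G$-invariant holomorphic vector fields on $V$ that are vertical for $r$. Since $V$ is a $G$-module and $r$ is a categorical quotient, $r_*\lie a$ is a coherent analytic sheaf on the Stein space $Q$, so Cartan's Theorem B gives $H^1(Q,\lie a)=0$; hence the additive Cousin problem for $\lie a$ always has a solution. Second, convert this additive splitting into the required multiplicative one by a Newton-type iteration on an exhaustion of $Q$ by relatively compact Runge Stein subsets, after refining the cover enough that the $\Phi_{ij}$ are close to the identity of $\A$.

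The principal obstacle is exactly this passage from additive to multiplicative: $\A$ is an infinite-dimensional sheaf of complex Lie groups, and controlling the iteration demands uniform estimates together with a $G$-equivariant Runge-type approximation statement on preimages $r\inv(K)$ for $K$ Stein compact in $Q$. The natural ingredients are the equivariant Oka principle of Heinzner and Kutzschebauch cited above, combined with Luna's slice theorem to describe $V\to Q$ --- and hence $\A$ --- locally near each Luna stratum. Given these, the iteration converges, the cocycle splits, and the local $\Phi_i$ glue to a global $G$-biholomorphism $X\to V$.
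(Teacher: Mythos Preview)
Your reformulation as a \v{C}ech $1$-cocycle for the sheaf of $G$-automorphisms over the identity is correct --- the paper itself observes in the remark following the theorem that the result is equivalent to the vanishing of $H^1(Q,\mathcal F)$ for exactly this sheaf. But what you have written is a plan, not a proof: you name the passage from additive to multiplicative splitting as ``the principal obstacle'' and then dismiss it in the last sentence. The Newton iteration you sketch needs, at minimum, a $G$-equivariant Runge approximation theorem on saturated preimages and uniform control of the exponential for the infinite-dimensional groups $\mathcal F(U)$; you supply neither, and developing such tools is essentially the content of the companion paper \cite{KLSOka} that the authors invoke as a black box. So as it stands there is a genuine gap at the decisive step.

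The paper's argument is entirely different and sidesteps the iteration. The key point is that $Q$ carries a $\C^*$-action coming from the Euler vector field $E$ on $V$, so $B=r_*E$ is a complete vector field on $Q$ whose real flow contracts $Q$ to the image of $0$. Transporting $E$ through the local $G$-biholomorphisms $\Phi_i$ gives local lifts of $B$ to $X$; the obstruction to a global lift lies in $H^1(Q,\mathcal M)$ for the \emph{coherent} sheaf $\mathcal M=\ker(p_*\colon\A_X^G\to\A_Q)$, and this vanishes by Cartan~B. The resulting $G$-invariant vector field $A$ on $X$ is complete because $B$ is. The real flows of $E$ and $A$ then yield $G$-equivariant retractions of $V$ and $X$ onto $r\inv(Q_u)$ and $p\inv(Q_u)$ for arbitrarily small $u$, where by hypothesis a single local $G$-biholomorphism $\Phi$ already exists. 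Conjugating $\Phi$ by these retractions produces a $G$-equivariant \emph{diffeomorphism} $X\to V$ over $Q$ that is biholomorphic on each fibre; the final upgrade to a $G$-biholomorphism is \cite[Theorem~1.1]{KLSOka}. Thus, instead of splitting the full nonabelian cocycle, the paper exploits contractibility of $Q$ to shrink until one chart suffices, trading the cocycle problem for a fibrewise-holomorphic diffeomorphism and deferring the remaining analytic difficulty to a separately established theorem.
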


\begin{remark}
Assume that $X$ and $V$ are locally $G$-biholomorphic over a common quotient $Q$ and let $\{U_i\}$ and $\Phi_i$ be as above. The maps $\Phi_j\circ\Phi_i\inv$ give us an element of ${ H}^1(Q,\mathcal F)$,  where for $U$ open in $Q$, $\mathcal F(U)$ is the group of $G$-biholomorphisms of $r\inv(U)$ which induce the identity on $U$. Theorem \ref{thm:main1} says that the cohomology class associated to $X$ is trivial. On the other hand, given any element of ${ H}^1(Q,\mathcal F)$ one constructs a corresponding Stein $G$-manifold $X$ \cite[Theorem 5.11]{KLSOka}. Hence  the theorem is equivalent to the statement that ${H}^1(Q,\mathcal F)$ is trivial.
\end{remark}

 We now consider two conditions on $X$. Assume that the set of closed orbits with trivial isotropy group is open in $X$  and that the complement, a closed subvariety of $X$, has complex codimension at least two. We say that $X$ is \emph{generic\/}. Let $X_{(n)}$ denote the subset of $X$ whose isotropy groups have dimension $n$. Following the terminology of \cite{Schwarz1995} we say that $X$ is \emph{$2$-large\/} (or just  \emph{large\/}) if $X$ is generic and $\codim X_{(n)}\geq n+2$ for $n\geq 1$. For other conditions equivalent to $2$-largeness see \cite[Section 9]{Schwarz1995}.  The conditions ``generic'' and ``large'' hold for ``most'' $X$ (see Remark \ref{rem:generic} below).

 \begin{remark}
 In \cite[Corollary 14] {KLS} we 
 established Theorem \ref{thm:main1} with the extra hypothesis that $X$ (equivalently $V$) is generic. Removing the hypothesis is technically very difficult and requires the techniques of \cite{KLSOka}.
 \end{remark}
 
 Our second main result is the following.
 
\begin{theorem}\label{thm:main2}
Suppose that $X$ is a Stein $G$-manifold and $V$ is a $G$-module satisfying the following conditions.
\begin{enumerate}
\item There is a stratified biholomorphism $\phi$ from $Q_X$ to $Q_V$.
\item  $V$  (equivalently, $X$) is large.
\end{enumerate}
Then, by perhaps changing $\phi$, one can arrange that $X$ and $V$ are locally $G$-biholomorphic over $Q_X\simeq Q_V$, hence $X$ and $V$ are $G$-biholomorphic.
\end{theorem}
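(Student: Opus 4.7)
My plan is to reduce to Theorem \ref{thm:main1} by establishing its hypothesis: that $X$ and $V$ are locally $G$-biholomorphic over a common quotient, obtained by identifying $Q_X$ and $Q_V$ via $\phi$ (possibly after first modifying $\phi$). Once this is done, Theorem \ref{thm:main1} delivers the global $G$-biholomorphism and there is nothing more to do.

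The local structure comes from the holomorphic version of Luna's slice theorem. For each $q \in Q_X$, it produces a reductive isotropy $H \leq G$, an $H$-module $W$ (the slice representation), and an $H$-invariant open $S \subset W$ containing $0$, together with a $G$-biholomorphism of $G \times_H S$ onto a saturated neighborhood of the closed orbit $p^{-1}(q)$. Because $\phi$ is stratified, the same $H$ and the same $H$-module $W$ arise at $r^{-1}(\phi(q))$, yielding a Luna model $G \times_H S'$ for a saturated neighborhood there. Shrinking and composing produces a local $G$-biholomorphism $\Psi_q \colon p^{-1}(U_q) \to r^{-1}(U_q')$, where $U_q \ni q$ and $U_q' \ni \phi(q)$ are quotient neighborhoods. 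Its descent $\psi_q \colon U_q \to U_q'$ is a stratified biholomorphism which in general differs from $\phi|_{U_q}$, so the $\Psi_q$ are not yet lifts of $\phi$.

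The main obstacle is to arrange for the $\Psi_q$ to descend to $\phi|_{U_q}$ after modification. This amounts to lifting each stratified biholomorphism $\phi \circ \psi_q^{-1}$, between open subsets of $Q_V$, to a $G$-biholomorphism of the corresponding preimages in $V$; post-composing $\Psi_q$ with such a lift then yields a local $G$-biholomorphism descending to $\phi|_{U_q}$. This lifting is the technical heart of the argument and is where largeness enters: the bounds $\codim V_{(n)} \geq n+2$ for $n \geq 1$ (together with genericity of the principal stratum) power Hartogs-type extension results, developed in \cite{KLSOka}, that kill the obstructions coming from the non-principal strata and allow stratified biholomorphisms between quotient opens to be lifted $G$-equivariantly. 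When a direct lifting of $\phi \circ \psi_q^{-1}$ is not possible, the slack indicated by \emph{perhaps changing $\phi$} is used: one first replaces $\phi$ by its composition with a stratified automorphism of $Q_V$ which does globally lift, bringing $\phi$ into a form for which the required local liftings exist. Either way, the hypothesis of Theorem \ref{thm:main1} is secured, and the $G$-biholomorphism of $X$ and $V$ follows.
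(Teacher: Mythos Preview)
Your outline has the right endgame—reduce to Theorem \ref{thm:main1} by manufacturing local $G$-biholomorphic lifts of $\phi$—but the step you flag as ``the technical heart'' is both misattributed and not actually carried out. You claim largeness enters through ``Hartogs-type extension results, developed in \cite{KLSOka},'' which ``allow stratified biholomorphisms between quotient opens to be lifted $G$-equivariantly.'' That is not the mechanism. The paper states explicitly that the only role of largeness is that it implies \emph{admissibility} (via \cite[Theorem 0.4]{Schwarz1995} and \cite[Remark 2.4]{Schwarz2013}): every holomorphic differential operator on the quotient lifts to a $G$-invariant one upstairs. The lifting of the correction $\tau=\phi\circ\psi^{-1}$ is then accomplished by the machinery of \cite{Schwarz2014}, not \cite{KLSOka}: admissibility guarantees that the deformation $\tau_t=t^{-1}\circ\tau\circ t$ has a limit $\tau_0\in\Aut_{\ql}(Q')$; one composes $\phi$ with an element of $\Aut_{\ql}(Q')$ (which lifts to $\GL(V')^G$, again by admissibility) to make $\tau_0$ the identity; then $\tau_t$ is generated by a time-dependent vector field $C_t$ on the quotient, which admissibility lifts to a $G$-invariant $A_t$ on $V$; integrating $A_t$ gives the $G$-biholomorphism $\Theta$ covering $\tau$.

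Moreover, the paper does not run a slice-theorem argument at every $q\in Q_X$. It works only near the fixed-point locus: Proposition \ref{prop:nearfixedpoints} (which needs no largeness) produces a $G$-biholomorphism $\Psi$ from a saturated neighbourhood of $X^G$ to one of $V^G$, and the vector-field argument above upgrades $\Theta\circ\Psi$ to a lift of $\phi$ near $x_0=\phi^{-1}(0)$. The final move is to lift $r_*E$ to $X$ (admissibility once more) and use the resulting flow, together with the Euler flow on $V$, to push this single local lift out to a $G$-biholomorphism $X_u\to V_u$ over $\phi$; then Theorem \ref{thm:main1} applies. Your pointwise scheme would also owe an explanation of why one global modification of $\phi$ works simultaneously at every $q$; the paper's Euler-flow reduction to a single point sidesteps that issue entirely.
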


The proofs of our main theorems    use  the flows of the Euler vector field on $V$ and an analogous vector field on $X$. They give us smooth deformation retractions of $Q_X\simeq Q_V$ to a point which are covered by $G$-equivariant retractions of $X$ and $V$ to fixed points. The difficult part of the proof of Theorem \ref{thm:main2} is to construct local $G$-biholomorphic lifts of $\phi$. Then we can reduce to Theorem \ref{thm:main1}.

One can ask if assumption (2) of Theorem \ref{thm:main2} can be removed. In Section \ref{sec:small} we show 
it can  if $\dim Q\leq 1$ or $G=\SL_2(\C)$. We would be surprised if there is a counterexample to Theorem \ref{thm:main2} with (2) omitted.

\smallskip\noindent
\textit{Acknowledgement.}  We thank the referees for suggestions improving the exposition.

\section{Background}  \label{sec:background}

\noindent
We start with some background.  For more information, see \cite{Luna} and \cite[Section~6]{Snow}.  Let $X$ be a normal Stein space with a holomorphic action of a reductive complex Lie group $G$.  The categorical quotient $Q_X=X\sl G$ of $X$ by the action of $G$ is the set of closed orbits in $X$ with a reduced Stein structure that makes the quotient map $p\colon X\to Q_X$ the universal $G$-invariant holomorphic map from $X$ to a Stein space. When $X$ is understood, we drop the subscript $X$ in $Q_X$. Since $X$ is normal, $Q=Q_X$ is normal.  If $U$ is an open subset of $Q$, then $\O_X(p^{-1}(U))^G \simeq \O_Q(U)$. We say that a subset of $X$ is \textit{$G$-saturated\/} if it is a union of fibres of $p$. If $X$ is a $G$-module, then $Q$ is just the complex space corresponding to the affine algebraic variety with coordinate ring  $\O_\mathrm{alg}(X)^G$. 

If $Gx$ is a closed orbit, then the stabiliser (or isotropy group) $G_x$ is reductive.  We say that closed orbits $Gx$ and $G{y}$ have the same \textit{isotropy type} if $G_x$ is $G$-conjugate to $G_{y}$. Thus we get the \textit{isotropy type  stratification} of $Q$ with strata whose labels are  conjugacy classes of reductive subgroups of $G$. 

Assume that $X$ is smooth and let $Gx$ be a closed orbit. Then we can consider the \emph{slice representation\/} which is the action of $G_x$ on $T_xX/T_x(Gx)$. We say that closed orbits $Gx$ and $Gy$ have the same \emph{slice type\/} if they have the same isotropy type and, after arranging that $G_x=G_y$, the slice representations are isomorphic representations of $G_x$. 
The slice type (Luna) strata are locally closed smooth subvarieties of $Q$. The   Luna stratification  is finer than the isotropy type stratification, but the Luna strata are unions of connected components of the isotropy type strata \cite[Proposition 1.2]{Schwarz1980}. Hence if the isotropy strata are connected, the Luna strata and isotropy type strata are the same. This occurs for the case of a $G$-module \cite[Lemma 5.5]{Schwarz1980}. 
 Alternatively, one can show directly that in a $G$-module, the isotropy group of a closed orbit determines the slice representation (see \cite[proof of Proposition 1.2]{Schwarz1980}).

There is a unique open stratum $Q_\pr\subset Q$, corresponding to the closed orbits with minimal stabiliser. We call this the \emph{principal stratum\/} and the closed orbits above $Q_\pr$ are called \emph{principal orbits\/}. The isotropy groups of principal orbits are called \emph{principal isotropy groups\/}. By definition, $X$ is generic when the principal isotropy groups are trivial and the closed subvariety $p\inv (Q\setminus Q_\pr)$ has codimension at least 2 in $X$. Recall that $X$ is 
large if it is generic and $\codim X_{(n)}\geq n+2$ for $n\geq 1$.

\begin{remark}  \label{rem:generic}
 If $G$ is simple, then, up to isomorphism, all but finitely many $G$-modules $V$ with $V^G=0$ are 
 large \cite[Corollary 11.6 (1)]{Schwarz1995}.  The same result holds for semisimple groups but one needs to assume that every irreducible component of $V$ is a faithful module for the Lie algebra  of $G$ \cite[Corollary 11.6 (2)]{Schwarz1995}.  A \lq\lq random\rq\rq\ $\C^*$-module is 
 large, although infinite families of counterexamples exist.  More precisely, a faithful $n$-dimensional $\C^*$-module without zero weights is 
 large if and only if it has at least two positive weights and at least two negative weights and any $n-1$ weights have no common prime factor.  Finally, $X$ is 
 large if and only if every slice representation is 
 large and the property of being 
 large only depends upon the Luna stratification of $Q$. 
 \end{remark}

\section{Proof of Theorem \ref{thm:main1}}

\noindent
  We will need to show that some vector fields on $X$ are \emph{complete\/}, i.e., can be integrated for all real time $t$.
  
  \begin{lemma}\label{lem:integrate-near-F}
Let $A$ be a $G$-invariant holomorphic vector field on $X$ and let $F$ be a fibre of $p\colon X\to Q$. Then there is a $G$-saturated neighbourhood $U$ of $F$ and an $\epsilon>0$ such that the local $1$-parameter group $\psi_t$ of $A$ exists on $(-\epsilon,\epsilon)\times U$.
\end{lemma}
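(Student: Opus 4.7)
The plan is to use the holomorphic slice theorem to localise near the unique closed orbit in $F$, exploit $G$-invariance to decouple the flow over the slice direction, and then handle the residual slice problem via Cartan linearisation plus a Kempf--Ness argument.

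Let $x_0$ lie in the closed orbit $Gx_0\subset F$ and set $H=G_{x_0}$. By the holomorphic slice theorem \cite[Section~6]{Snow} there is an $H$-invariant locally closed Stein subspace $S\subset X$ through $x_0$ and a $G$-equivariant biholomorphism $\tau\colon G\times^{H} S\xrightarrow{\sim} U'$ onto a $G$-saturated open neighbourhood $U'$ of $Gx_0$; since $U'=p\inv(p(U'))$ with $q\in p(U')$, in fact $F\subset U'$. Lift $A$ through the quotient $G\times S\to G\times^{H}S$; left-$G$-invariance determines the lift by its restriction to $\{e\}\times S$, which splits as $\alpha_0(s)+\beta_0(s)$ with $\alpha_0\colon S\to\lie{g}$ an $H$-equivariant map (for the adjoint action on $\lie{g}$) and $\beta_0$ an $H$-invariant holomorphic vector field on $S$. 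The flow equations then decouple into the autonomous $\dot s=\beta_0(s)$ on $S$ together with $g\inv\dot g=\alpha_0(s(t))$, the latter solvable by quadrature once $s(t)$ is known. Hence the lemma reduces to finding an $H$-saturated open neighbourhood $S_1$ of the slice fibre $F_S=F\cap S$ in $S$ and $\epsilon>0$ on which the flow of $\beta_0$ exists.

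For the slice problem, apply Cartan linearisation at the $H$-fixed point $x_0\in S$ to replace $S$ by the $H$-module $T_{x_0}S$ with $x_0=0$, and fix a $K_H$-invariant Hermitian norm ($K_H\subset H$ a maximal compact subgroup). Local ODE theory gives $\epsilon>0$ and a small ball $B_r$ around $0$ on which the flow of $\beta_0$ exists; by $H$-equivariance of the flow this extends to the open set $H\cdot B_r$ for the same time. The main obstacle is to upgrade $H\cdot B_r$ to an $H$-saturated neighbourhood of $F_S=p_S\inv([0])$, where $p_S\colon S\to S\sl H$ is the slice quotient. I would invoke the Kempf--Ness theorem: for any sufficiently small open $V''\ni[0]$ in $S\sl H$, every closed $H$-orbit with image in $V''$ has its norm-minimising representative (lying in the zero set of the moment map) inside $B_r$, and every non-closed orbit with image in $V''$ has this representative in its closure and hence itself meets $B_r$ after a slight shrinking of $V''$. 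Thus $p_S\inv(V'')\subset H\cdot B_r$ is the desired $H$-saturated open neighbourhood of $F_S$, completing the argument.
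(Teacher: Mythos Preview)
Your argument is correct in outline, and the key steps (the flow decoupling on $G\times S$, the global solvability of the $g$-equation on the group, and the Kempf--Ness step showing that $H\cdot B_r$ contains an $H$-saturated neighbourhood of the slice null-fibre) all go through.  One point deserves a word of care: the ``lift'' of $A$ from $G\times^{H}S$ to $G\times S$ is not canonical (the quotient map is a principal $H$-bundle, and vector fields on the base do not lift uniquely); you should say that after fixing an $H$-invariant splitting $\lie g=\lie h\oplus\lie m$, there is a unique $G$-invariant lift whose $\lie g$-component lies in $\lie m$, and this lift is automatically $H$-invariant with $\alpha_0$ $H$-equivariant and $\beta_0$ $H$-invariant.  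With that said, the proof stands.

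Your route is genuinely different from the paper's.  The paper never passes to the slice or decouples the flow.  Instead it observes that the closed orbit in $F$ contains a compact $K$-orbit $O$ ($K$ a maximal compact subgroup of $G$), gets the local flow on a $K$-stable neighbourhood $U$ of $O$ by ordinary ODE theory and compactness, and then invokes the extension theorem of Heinzner--Kutzschebauch \cite[Section~5, Corollary~1]{Heinzner-Kutzschebauch}: each $\psi_t$ is a $K$-equivariant holomorphic map on $U$, hence extends uniquely to a $G$-equivariant holomorphic map on the $G$-saturated set $G\cdot U$, and these extensions form the required local one-parameter group.  So the paper trades your explicit slice/Kempf--Ness geometry for a single black-box citation.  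What your approach buys is transparency: it makes visible \emph{why} the $G$-sweep of a small ball is saturated (namely, every orbit over a small quotient neighbourhood limits onto the Kempf--Ness set near~$0$), which is in effect part of what underlies the Heinzner--Kutzschebauch result.  What the paper's approach buys is brevity and the avoidance of the somewhat delicate bookkeeping in your first paragraph.
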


\begin{proof}
Let   $K$ be a maximal compact subgroup of $G$. By   \cite[Section 5 Corollary 1]{Heinzner-Kutzschebauch} there is a  $K$-orbit $O$ in $F$ and a neighbourhood basis $\{U_i\}_{i\in\mathbb N}$ of $O$ consisting of $K$-stable open sets with the following properties.
\begin{enumerate}
\item For all $i$, $G\cdot U_i$ is $G$-saturated.
\item For all $i$, any $K$-equivariant holomorphic map from $U_i$ to $X$ has a unique extension to a $G$-equivariant holomorphic map of $G\cdot U_i$ to $X$.
\end{enumerate}
Since $O$ is compact,   there is an $\epsilon>0$ and a neighbourhood $U$ of $O$ such that  $\psi_t$ is defined on $(-\epsilon,\epsilon)\times U$. Since $A$ is $K$-invariant and holomorphic, so is each $\psi_t$. We may assume that $U$ is one of the $U_i$. Then each $\psi_t$ extends  uniquely  to a $G$-equivariant holomorphic map $\widetilde\psi_t$ of $G\cdot U$ to $X$ and the $\widetilde\psi_t$ are easily seen to be a local $1$-parameter group corresponding to  $A$.  
\end{proof}

\begin{definition}  
Let $B$ be a holomorphic vector field on $Q$ (derivation  of $\O_Q$). We say that a $G$-equivariant holomorphic vector field $A$ on $X$ is a \emph{lift of $B$\/} if $A(p^*f)=p^*B(f)$ for every $f\in\O(Q)$. 
\end{definition}

We leave the proof of the following to the reader.

\begin{corollary}\label{cor:AB-complete}
Let $A$ be a $G$-invariant vector field on $X$ which is a lift of the holomorphic vector field $B$ on $Q$. Then $A$ is complete if and only if $B$ is complete.
\end{corollary}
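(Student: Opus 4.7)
The forward direction is essentially immediate. If $A$ is complete with flow $\psi_t\colon X\to X$ ($t\in\R$), each $\psi_t$ is $G$-equivariant (since $A$ is $G$-invariant), so it descends to a holomorphic map $\bar\psi_t\colon Q\to Q$, and the family $\{\bar\psi_t\}$ is a $1$-parameter group. Because $A$ lifts $B$, differentiating at $t=0$ shows that $\bar\psi_t$ is the flow of $B$, so $B$ is complete.

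For the reverse direction, suppose $B$ is complete and let $\phi_t\colon Q\to Q$ be its flow. Fix $x\in X$ and $T>0$. The set $K=\phi_{[0,T]}(p(x))\subset Q$ is compact. For each $q\in K$, Lemma~\ref{lem:integrate-near-F} applied to the fibre $p\inv(q)$ yields a $G$-saturated open neighbourhood $U_q$ of $p\inv(q)$ and $\epsilon_q>0$ such that the local flow $\psi_t$ of $A$ exists on $(-\epsilon_q,\epsilon_q)\times U_q$. The sets $p(U_q)$ are open in $Q$ and cover $K$; extracting a finite subcover and taking the minimum of the corresponding $\epsilon_q$ yields $\epsilon>0$ and a $G$-saturated open set $U\supset p\inv(K)$ on which $\psi_t$ is defined for $|t|<\epsilon$.

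Now integrate in finitely many steps of length at most $\epsilon/2$. At each intermediate time $s\in[0,T]$, the point $\psi_s(x)$ lies over $\phi_s(p(x))\in K$ because $A$ lifts $B$, hence $\psi_s(x)\in p\inv(K)\subset U$, so we can continue integrating for another $\epsilon/2$. After finitely many such extensions we obtain $\psi_t(x)$ for all $t\in[0,T]$. The same argument with negative time shows completeness for $t\leq 0$. Since $x$ and $T$ were arbitrary, $A$ is complete. The only real content beyond bookkeeping is Lemma~\ref{lem:integrate-near-F}, which supplies the uniform existence time on $G$-saturated tubes over compact subsets of $Q$; once that is in hand, the corollary is a standard compactness-and-continuation argument.
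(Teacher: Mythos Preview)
Your proof is correct. The paper itself leaves the proof of Corollary~\ref{cor:AB-complete} to the reader, so there is no argument to compare against; your write-up is exactly the standard compactness-and-continuation argument the authors have in mind, with Lemma~\ref{lem:integrate-near-F} supplying the uniform existence time over saturated tubes.
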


Recall that we are assuming that the Stein $G$-manifold $X$ and the $G$-module $V$ are locally $G$-biholomorphic over their common quotient $Q$.

The scalar action of $\C^*$ on $V$ descends to a  $\C^*$-action on $Q$ (see below), 
in particular,  we have an action of $\R^{>0}=\{u\in\R\mid u>0\}$ on $Q$. We will now show how to lift  the $\R^{>0}$-action to $X$.  Let $\A_Q$ denote the sheaf of holomorphic vector fields on $Q$ and let $\A(Q)$ denote the global sections. Similarly we have the sheaf of holomorphic vector fields  $\A_X$ on $X$. Let $U$ be open in $Q$ and let $\A_X^G(U)$ denote $\A_X(p\inv(U))^G$. Then $\A_X^G$ is a coherent sheaf of $\O_Q$-modules  \cite{Roberts1986} as is $\A_Q$.  We have  $p_*\colon \A_X(p\inv(U))^G\to\A_Q(U)$  where $p_*(A)(f)=A(p^*(f))$ for $f\in\O_Q(U)\simeq\O_X(p\inv(U))^G$. Then  $p_*\colon\A_X^G\to\A_Q$ is a morphism of coherent sheaves of $\O_Q$-modules. Hence the kernel $\mathcal M$ of $p_*$ is coherent. 
 
 Let $r_1,\dots,r_m$ be homogeneous generators of $\O_\alg (V)^G$  where $r_i$ has degree $d_i$. Then $(r_1,\dots,r_m)\colon V\to\C^m$ induces a map $f\colon Q\to\C^m$ which is an algebraic isomorphism of $Q$ onto the image of $f$. Hence we can think of the quotient map $r\colon V\to Q$ as the polynomial map with entries $r_i$. Let $t\in\C^*$ and $q=(q_1,\dots,q_m)\in Q$. Define $t\cdot q= (t^{d_1}q_1,\dots,t^{d_m}q_m)$. This is a $\C^*$-action on $Q$ and   $r(tv)=t\cdot r(v)$, $t\in\C^*$, $v\in V$.
We have the Euler vector field $E=\sum_i x_i\pt/\pt x_i$ on $V$, where the $x_i$ are the coordinate functions on $V$. Let $y_1,\dots,y_m$ be the usual coordinate functions on $\C^m$. Then $r_*(E)=\sum d_i y_i\pt/\pt y_i\in\A(Q)$.

\begin{lemma}
Let $B=r_*E$. Then $B$ lifts to  a $G$-invariant holomorphic vector field $A$ on $X$.
\end{lemma}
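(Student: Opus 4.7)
The plan is to construct $A$ by patching local lifts and then using coherent sheaf cohomology to correct the resulting cocycle. First I would exploit the local triviality hypothesis: by assumption there is an open cover $\{U_i\}_{i\in I}$ of $Q$ together with $G$-equivariant biholomorphisms $\Phi_i\colon p\inv(U_i)\to r\inv(U_i)$ inducing the identity on $U_i$. Setting $A_i:=\Phi_i^*(E|_{r\inv(U_i)})$ gives a $G$-invariant holomorphic vector field on $p\inv(U_i)$. Because $\Phi_i$ covers the identity of $U_i$, we have $\Phi_i^*(r^*f)=p^*f$ for every $f\in\O_Q(U_i)$, so
\[
A_i(p^*f)=\Phi_i^*\bigl(E(r^*f)\bigr)=\Phi_i^*\bigl(r^*B(f)\bigr)=p^*B(f),
\]
which is to say that $p_*(A_i)=B|_{U_i}$. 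Thus each $A_i$ is a local lift of $B$.

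Next I would measure the failure of the $A_i$ to agree on overlaps. On $p\inv(U_i\cap U_j)$ the difference $A_{ij}:=A_i-A_j$ is $G$-invariant and satisfies $p_*(A_{ij})=0$, so $A_{ij}\in\mathcal M(U_i\cap U_j)$, where $\mathcal M=\ker\bigl(p_*\colon\A_X^G\to\A_Q\bigr)$ is the coherent $\O_Q$-module introduced just above the statement. The collection $(A_{ij})$ is a \v Cech $1$-cocycle for $\mathcal M$ on the cover $\{U_i\}$.

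Now I would invoke Cartan's Theorem~B. The categorical quotient $Q$ of the Stein $G$-manifold $X$ is Stein (Heinzner), and $\mathcal M$ is coherent, so $H^1(Q,\mathcal M)=0$. Refining the cover if necessary, the cocycle $(A_{ij})$ is a coboundary: there exist $M_i\in\mathcal M(U_i)$ with $A_i-A_j=M_i-M_j$ on $U_i\cap U_j$. Setting $\widetilde A_i:=A_i-M_i\in\A_X^G(U_i)$ we get local lifts of $B$ that agree on overlaps and therefore glue to a global $G$-invariant holomorphic vector field $A$ on $X$ with $p_*(A)=B$, as required.

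The only nontrivial ingredient is the vanishing $H^1(Q,\mathcal M)=0$, which is handed to us by the coherence of $\mathcal M$ already established in the preceding paragraph of the paper and by the fact that $Q$ is Stein; the rest is a routine patching argument made possible by the hypothesis that $X$ and $V$ are locally $G$-biholomorphic over $Q$.
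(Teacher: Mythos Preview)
Your argument is correct and follows exactly the same route as the paper: transport $E$ through the local $G$-biholomorphisms to obtain local lifts of $B$, observe that the discrepancies form a $1$-cocycle with values in the coherent sheaf $\mathcal M=\ker p_*$, and kill it with Cartan's Theorem~B on the Stein space $Q$. The paper states this more tersely, but your added verification that $p_*(A_i)=B|_{U_i}$ via $\Phi_i^*(r^*f)=p^*f$ is exactly the point.
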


\begin{proof}
Let $U$ be open and $G$-saturated in $V$ and let $\Phi\colon U\to \Phi(U)\subset X$ be a $G$-biholomorphism inducing the identity on $r(U)$. Let $A_U$ denote the image of $E|_U$ in $\A_X(\Phi(U))^G$ under the action of $\Phi$. Then $A_U$ is a lift of $B|_{r(U)}$. The various $A_U$  differ by elements  in the kernel $\mathcal M$ of $p_*$, hence a global lift of $B$ is obstructed by an element of  $H^1(Q,\mathcal M)$, which vanishes by Cartan's Theorem B. Hence $A$ exists.
\end{proof}

Choose a lift $A$ of $B$ and let $\psi_t$ denote the flow of $A$ on $X$. From Corollary \ref{cor:AB-complete} we have:

\begin{lemma}\label{lem:flowexists}
The flow $\psi_t$ exists for all $t\in\R$.
\end{lemma}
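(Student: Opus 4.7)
The plan is to invoke Corollary \ref{cor:AB-complete}: since $A$ is a lift of $B$, the completeness of $\psi_t$ for all real $t$ is equivalent to the completeness of the flow of $B$ on $Q$. So it suffices to show that $B$ is complete.

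To see that $B$ is complete on $Q$, I would use the explicit description of $B$ coming from the Euler field. Because $Q$ embeds into $\C^m$ via $f = (r_1,\dots,r_m)$ as the image of $r\colon V\to Q\subset\C^m$, and because $r\circ(\text{scalar mult. by }t) = t\cdot r$ under the $\C^*$-action $t\cdot q = (t^{d_1}q_1,\dots,t^{d_m}q_m)$, the flow of $E$ on $V$, namely $v\mapsto e^sv$, pushes forward to the $\R^{>0}$-flow on $Q$ given by $q\mapsto e^s\cdot q = (e^{d_1 s}q_1,\dots,e^{d_m s}q_m)$. This flow is defined for every $s\in\R$ because the $\C^*$-action preserves $Q$ (as $Q$ is the image of $V$ under the equivariant map $r$). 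Therefore $B$ is complete on $Q$.

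Applying Corollary \ref{cor:AB-complete} then yields that $A$ is complete on $X$, i.e., $\psi_t$ exists for all $t\in\R$, which is the desired conclusion. There is no real obstacle here: the argument is essentially a bookkeeping reduction to the fact that the scalar $\C^*$-action on the $G$-module $V$ descends to a globally defined $\C^*$-action on $Q$, together with the earlier completeness criterion.
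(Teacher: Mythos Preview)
Your proposal is correct and follows exactly the paper's approach: the paper simply states that the lemma follows from Corollary~\ref{cor:AB-complete}, and your argument supplies the (implicit) verification that $B=r_*E$ is complete because its flow is $q\mapsto e^s\cdot q$, which is globally defined on $Q$ via the $\C^*$-action.
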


We have an $\R^{>0}$-action on $X$ where $t\cdot x=\psi_{\log t}(x)$, $t\in\R^{>0}$, $x\in X$, and it is the promised lift of the $\R^{>0}$-action on $Q$. We now need to find retractions of $Q$, $X$ and $V$. Choose positive integers $e_i$ such that $d_ie_i=d$ is independent of $i=1,\ldots, m$. For $q=(q_1,\dots,q_m)\in Q$ let $\rho(q)=\sum_i|q_i|^{2e_i}$. Choose $u\in \R^{>0}$ and set $Q_u=\{q\in Q\mid \rho(q)<u\}$. Let $h\colon [0,\infty)\to[0,u)$ be a diffeomorphism which is the identity in a neighbourhood  of $0$. Set 
$$
a(q)=\left(\frac{h(\rho(q))}{\rho(q)}\right)^{1/2d}\text{ and }\alpha(q)=a(q)\cdot q,\ q\in Q.
$$
Here $a(q)\cdot q$ denotes the $\C^*$-action. Now $\alpha$ is a diffeomorphism of $Q$ with $Q_u$ with inverse
$$
\beta(q)=b(q)\cdot q \text { where }b(q)=\left(\frac{h\inv(\rho(q))}{\rho(q)}\right)^{1/2d},\ q\in Q_u.
$$

\begin{proof}[Proof of Theorem \ref{thm:main1}] For $u>0$, let $X_u$ denote $p\inv(Q_u)$ and let $V_u$ denote $r\inv(Q_u)$. Choose $u>0$ so that we have a local $G$-biholomorphism $\Phi\colon X_u\to V_u$ inducing the identity on $Q_u$. Let $\eta_t$ be the flow of the Euler vector field on $V$. Then we have a $G$-diffeomorphism $\sigma$ of $V$ with $V_u$ which sends $v\in V$ to $\eta_{\log a(r(v))}(v)$. Using the flow $\psi_t$ of the vector field $A$ that we constructed on $X$, we have a $G$-diffeomorphism $\tau$ of $X$ with $X_u$ which sends $x\in X$ to $\psi_{\log a(p(x))}(x)$. By construction, $\sigma$ and $\tau$ map fibres $G$-biholomorphically to fibres and $\sigma\inv\circ\Phi\circ \tau$ is a $G$-diffeomorphism of $X$ and $V$ inducing the identity map on $Q$. By   \cite[Theorem 1.1]{KLSOka}, $X$ and $V$ are $G$-biholomorphic.
\end{proof}

\begin{remark}\label{rem:aboutA}
We used the fact that $X$ and $V$ are locally $G$-biholomorphic over $Q$ to construct our special $G$-invariant vector field $A$. But given 
any lift $A$ of $B=r_*E$, we can construct our  $G$-diffeomorphism which is biholomorphic on fibres, as long as we have a $G$-biholomorphism of neighbourhoods of $p\inv (r(0))$ and $r\inv(r(0))$ inducing the identity on $Q$.
\end{remark}

\section{Proof of Theorem \ref{thm:main2}}

\noindent
Assume for now  that we have a biholomorphism $\phi\colon Q_X\to Q_V$ which preserves the Luna stratifications. Note that $X^G$ is smooth and closed in $X$. We may identify $X^G$ with its image in $Q_X$ and similarly for $V^G$. Then $\phi$ induces a biholomorphism (which we also call $\phi$) from $X^G$ to $V^G$. We have $V=V^G\oplus V'$ where $V'$ is a $G$-module. Since $X^G\simeq V^G$ is  contractible, the normal bundle $\N(X^G)=(TX|_{X^G})/T(X^G)$   is a trivial $G$-vector bundle \cite{Heinzner-Kutzschebauch} and we have an isomorphism $\Phi\colon \N(X^G)\to V^G\times V'$ (viewing the 
target as the $G$-vector bundle $V^G\times V'\to V^G$). Since $TX|_{X^G}$ is also $G$-trivial, we may think of $\N(X^G)$ as a 
$G$-subbundle of $TX|_{X^G}$. Note that $\Phi$ restricts to $\phi$ on the zero section.

The following proposition does not need the hypothesis that $X$ or $V$ is large.

\begin{proposition}\label{prop:nearfixedpoints}
Let $\phi\colon X^G\to V^G$ and $\Phi$ be as above. Then there is a $G$-saturated neighbourhood $U$ of $X^G$ in $X$ and a $G$-saturated neighbourhood $U'$ of $V^G$ in $V$ and a $G$-biholomorphism $\Psi\colon U\to U'$ whose differential induces $\Phi$ on $\N(X^G)$. 
\end{proposition}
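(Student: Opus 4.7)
The plan is to reduce this to an equivariant holomorphic tubular neighbourhood theorem for $X^G \subset X$, after which the given $G$-vector bundle isomorphism $\Phi$ provides the bridge to $V$. On the $V$ side no work is required: since $V = V^G \oplus V'$, the natural map $V^G \times V' \to V$, $(p,v') \mapsto p+v'$, identifies $\N(V^G)$ as a $G$-vector bundle with $V$, carrying the zero section identically to $V^G$ with identity fibrewise derivative. The entire problem is therefore concentrated on $X$.

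The core task is to construct a $G$-biholomorphism $\iota\colon W \to U$, where $W$ is a $G$-saturated open neighbourhood of the zero section in $\N(X^G)$ and $U$ is a $G$-saturated open neighbourhood of $X^G$ in $X$, such that $\iota$ is the identity on $X^G$ and its fibrewise derivative at each point of $X^G$ is the identity on $\N(X^G)$. Locally near each $x \in X^G$ the holomorphic Luna slice theorem (Snow, Heinzner) supplies such a $G$-biholomorphism between a $G$-saturated neighbourhood of $x$ in $X$ and a neighbourhood of $0$ in $T_xX = T_x(X^G)\oplus \N_x(X^G)$ carrying the linear $G$-action. On overlaps two such local linearisations differ by a $G$-equivariant germ of a self-biholomorphism of $\N(X^G)$ fixing the zero section with identity derivative, so the discrepancies form a non-abelian $1$-cocycle on $X^G$. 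I would trivialise it by successive approximation in powers of the normal coordinate: modulo order $k+1$ the obstruction lives in $H^1(X^G,\mathcal E_k)$, where $\mathcal E_k$ is the coherent sheaf of $G$-invariant sections of $\operatorname{Sym}^k\N(X^G)^*\otimes \N(X^G)$ on $X^G$, which vanishes by Cartan's Theorem B on the Stein manifold $X^G$; convergence of the resulting formal correction follows from a standard Grauert-type estimate. Alternatively, one can invoke directly the equivariant Oka principle of Heinzner--Kutzschebauch.

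With $\iota$ in hand I set $\Psi = \Phi \circ \iota^{-1}\colon U \to V$, identifying the target of $\Phi$ with $V$ via the canonical identification from the first paragraph. Then $\Psi$ is a composition of $G$-biholomorphisms, its image $U' = \Psi(U)$ is $G$-saturated by $G$-equivariance of $\Phi$ and $\iota^{-1}$, and by construction $\Psi$ restricts to $\phi$ on $X^G$ with fibrewise differential $\Phi$ on $\N(X^G)$, as required. The principal obstacle is the global construction of $\iota$: the local slices are essentially free, but the holomorphic non-abelian gluing along $X^G$ genuinely needs coherent sheaf cohomology vanishing together with, in spirit, an Oka-type argument, and is the only part of the proposition that uses more than linear algebra and the formal structure of $\Phi$.
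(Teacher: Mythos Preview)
Your reduction to an equivariant holomorphic tubular neighbourhood for $X^G\subset X$ is sound, and with $\iota$ in hand the conclusion follows exactly as you say. But the route is genuinely different from the paper's, and your hardest step is precisely the one the paper bypasses.

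The paper never glues local slices. Instead it takes a basis $v_1,\dots,v_k$ of $V'$, pulls the corresponding constant vector fields back along $\Phi$ to sections $X_1,\dots,X_k$ of $\N(X^G)\subset TX|_{X^G}$, and extends each $X_i$ to a global holomorphic vector field on the Stein manifold $X$. Composing the complex-time flows $\rho^{(i)}_{a_i}$ starting at $\phi^{-1}(v)$ gives a single holomorphic map $F\colon V^G\times V'\to X$ defined near $V^G\times\{0\}$ whose normal derivative is $\Phi^{-1}$. Its inverse $\Psi$ is then averaged over a maximal compact $K$ and extended to a $G$-equivariant map on a $G$-saturated set using the Heinzner--Kutzschebauch extension mechanism for $K$-equivariant holomorphic maps. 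No cocycle ever appears.

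What this buys: the paper trades your non-abelian $H^1$ problem (local slices plus successive formal correction plus a Grauert-type convergence argument, each requiring care to keep $G$-equivariance) for two elementary global inputs---extension of holomorphic vector fields from a closed submanifold of a Stein manifold, and the $K$-to-$G$ extension lemma. Your approach is more conceptual and yields the tubular neighbourhood theorem itself as a by-product, but the ``standard Grauert-type estimate'' you invoke is the genuinely delicate point and is only sketched; the paper's argument is shorter and uses nothing beyond flows and averaging.
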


\begin{proof} Let $v_1,\dots,v_k$ be a basis of $V'$ and let $A_1,\dots,A_k$ denote the corresponding constant vector fields on $V$. Let $X_1,\dots,X_k$ denote their inverse images under $\Phi$. Then the $X_i$ are holomorphic vector fields defined on $X^G$ and they extend to global vector fields on $X$, which we also denote as $X_i$. Let $\rho^{(i)}_t$ denote the complex flow of $X_i$, $i=1,\dots,k$. For $(v,v')\in V^G\oplus V'$, $v'=\sum a_iv_i$, let 
$$
F(v,v')=\rho^{(1)}_{a_1}\rho^{(2)}_{a_2}\cdots\rho^{(k)}_{a_k}(\phi\inv(v)).
$$
Then $F$ is defined and biholomorphic on a neighbourhood of $V^G\times\{0\}$ and the derivative of $F$ along $V^G\times\{0\}$ is $\Phi\inv$.   The inverse of $F$ gives us a biholomorphism  $\Psi\colon U\to U'$ with the following properties:
\begin{enumerate}
\item $U$ is a neighbourhood of $X^G$ in $X$ and $U'$ is a neighbourhood of $V^G$ in $V$.
\item $\Psi$ restricts to $\phi$ on $X^G$.
\item $d\Psi$ restricted to $\N(X^G)$ gives $\Phi$.
\end{enumerate}
Let $K$ be a maximal compact subgroup of $G$. Averaging $\Psi$ over $K$ gives us a new holomorphic map (also called $\Psi$) which still satisfies the conditions  above, perhaps with respect to  smaller neighbourhoods $U_0$ and $U_0'$. Shrinking  we may assume that $U_0$ and $U_0'$ are $K$-stable. 
It follows from \cite[Section 5, Lemma 1, Proposition 1 and Corollary 1]{Heinzner-Kutzschebauch} that shrinking further we may achieve the following:
\begin{enumerate}
\addtocounter{enumi}{3}
\item The restriction of $\Psi$ to $U_0$ extends to a $G$-equivariant 
holomorphic map on $U=G\cdot U_0$ (which we also call $\Psi$).
\item The restriction of $\Psi\inv$ to $U_0'$ extends to a $G$-equivariant 
holomorphic map $\Theta$ on $U'=G\cdot U_0'$.
\end{enumerate}
We can reduce to the case  that $U$ and $U'$ are   connected.  Then it is clear that $\Psi\circ \Theta$ and $\Theta\circ \Psi$ are  identity maps.   Removing $p\inv(p(X\setminus U))$ from $U$ we can arrange  that it is $G$-saturated, and similarly for $U'$.
\end{proof}

Note that our $\Psi$ only induces $\phi$ on $X^G$.  Let $\psi$ denote the biholomorphism of $U\sl G$ and $U'\sl G$ induced by $\Psi$ and  
let $\tau$ denote $\phi\circ\psi\inv$. Then $\tau$ is a strata preserving biholomorphism defined on a neighbourhood of $V^G$ in $Q_V$ and $\tau$  is the identity on $V^G$. Our goal is to show that $\tau$ has a local $G$-biholomorphic lift to $V$ if we modify $\phi$ without changing its restriction to $X^G$. Let $x_0=\phi\inv(0)\in X^G$.

We will need to use some results from \cite{Schwarz2014}. We say that \emph{$X$ is admissible\/} if $X$ is generic and every holomorphic differential operator on $Q$ lifts to a $G$-invariant holomorphic differential operator on $X$. In particular, holomorphic vector fields on $Q$ lift to $G$-invariant holomorphic vector fields on $X$. Admissibility is a local condition over $Q$, hence $X$ is admissible if and only if each slice representation of $X$ is admissible. Since $X$ is large, 
\cite[Theorem 0.4]{Schwarz1995} and \cite[Remark 2.4]{Schwarz2013} show that $X$ is admissible. The only role of largeness in this paper is that it implies admissibility.

Let $s_1,\dots,s_n$ denote homogeneous invariant polynomials generating $\O_\mathrm{alg}(V')^G$, where $\delta_i$ is the degree of $s_i$, $i=1,\dots,n$. Let $s=(s_1,\dots,s_n)\colon V'\to\C^n$. We can identify $Q'=V'\sl G$ with the image of $s$ and we can identify $Q_V$ with $V^G\times Q'$. We have an action of $\C^*$ on $Q'$ where  $t\in \C^*$ sends $(q_1,\dots,q_n)\in Q'$ to $(t^{\delta_1}q_1,\dots,t^{\delta_n}q_n)$. 

Let   $\Aut_\ql(Q')$ denote the \emph{quasilinear\/}  automorphisms of $Q'$, that is, the automorphisms which commute with the $\C^*$-action.  An element of $\Aut_\ql(Q')$  is determined by its (linear) action on the invariant polynomials of degrees $\delta_i$, $i=1,\dots,n$. Hence $\Aut_\ql(Q')$ is  a linear algebraic group. Let $\sigma$ be a germ of a strata preserving automorphism of $Q'$ at $0=s(0)$. Then $\sigma(0)=0$. Let $\sigma_t$ denote the germ of an automorphism of $Q'$ which sends $q'$ to $t\inv\cdot\sigma(t\cdot q')$, $t\in\C^*$, $q'\in Q'$. It is not automatic that the limit of $\sigma_t$ exists as $t\to 0$. One needs to have the vanishing of certain terms of the Taylor series of $\sigma$ (see \cite[Section 2]{Schwarz2014}). But this occurs in the case that $V$ is   admissible \cite[Theorem 2.2]{Schwarz2014}. Moreover,  the limit  $\sigma_0$ lies in $\Aut_\ql(Q')$ and  $\sigma_t(q')$ is holomorphic in all $t\in\C$ and $q'\in Q'$ such that $\sigma_t(q')$ is defined. We consider $\Aut_\ql(Q')$ as the  subgroup of $\Aut(V^G\times Q')$ whose elements  send  $(v,q')$ to $(v,\rho(q'))$, $v\in V^G$, $q'\in Q'$, $\rho\in\Aut_\ql(Q')$.

Consider our automorphism $\tau$ defined on a neighbourhood of $V^G$ in $Q_V= V^G\times Q'$. Write $\tau=(\tau_1,\tau_2)$, where $\tau_1$ takes values in $V^G$ and $\tau_2$ in $Q'$. Then $\tau_1(v,0)=v$   and $\tau_2(v,0)=0$ for all $v\in V^G$. It follows from the inverse function theorem that $\tau_2(v,\cdot)$ is a germ of a strata preserving automorphism of $Q'$. Thus we have a holomorphic family of automorphisms $\tau_2(v,\cdot)_t$ with $\tau_{2,v}:=\tau_2(v,\cdot)_0\in\Aut_\ql(Q')$. 
Set $\tau_1(v,q)_t=\tau_1(v,t\cdot q)$. Then 
$$
\tau_t(v,q)=(\tau_1(v,q)_t,\tau_2(v,q)_t)
$$
is a homotopy connecting $\tau$ with $\tau_0$ where $\tau_0(v,q)=(v,\tau_{2,v}(q))$. The homotopy is holomorphic in all $v$, $q$ and $t$ such that $(v,t\cdot q)$ lies in the domain of $\tau$. The connected component of $\Aut_\ql(Q')$ containing $\tau_{2,v}$ is independent of $v$. Let $\rho\in\Aut_\ql(Q')$ denote any of the $\tau_{2,v}$, say $\tau_{2,0}$, which we can consider as an automorphism of $Q_V$. Change our original $\phi$ to  $\rho\inv\circ\phi$. This does not change $\phi$ restricted to $X^G$ or the biholomorphism $\Psi$ that we constructed. We then    find ourselves in the situation where $\tau_{2,v}$ lies in the identity component of $\Aut_\ql(Q')$ for every $v\in V^G$. Since $V$ is admissible,   the identity component of $\Aut_\ql(Q')$ is the image of $\GL(V')^G$ \cite[Proposition 2.8]{Schwarz2014}. Then there is a  neighbourhood of $0\in V^G$ on which we have a holomorphic lift  of the $\tau_{2,v}$ to elements of $\GL(V')^G$. Hence we can reduce to the case that $\tau_0$ is the identity. Now let $B=\{t\in \C: |t|\leq 
2\}$.  Shrinking our neighbourhood   $U'$ (which we now just consider as a neighbourhood of  $0\in V$), we can arrange that $\tau_t$ is defined on $\Omega:=U'\sl G$ for $t\in B$. Thus $\tau_t$ is 
a homotopy,  $t\in   B$, fixing  $(V^G\times\{0\})\cap\Omega$ and starting at the identity. Let 
$$
\Delta=\{(t,q)\in  B\times \Omega\mid q\in\tau_t(\Omega)\}.
$$
 Then $\Delta$ is a neighbourhood of $ B\times\{(0,0)\}$ where $(0,0)$ is the origin in $V^G\times Q'$. Hence there is a neighbourhood $\Omega_1$ of $(0,0)$ such that $  B\times\Omega_1\subset \Delta$. We have a (complex) time dependent vector field $C_t$  defined by
$$
C_t(\tau_t(q))=\frac {d\,\tau_s(q)}{ds}\big|_{t=s},\ t\in  B,\ q\in\Omega,
$$
and by definition of $\Omega_1$, $C_t(q)$ is defined for all $t\in B$ and $q\in\Omega_1$. Let 
$$
\Delta'=\{(t,q)\in[0,1]\times\Omega_1\mid \tau_t(q)\in\Omega_1\}.
$$
 Then, as before, $\Delta'$ contains an open set of the form $[0,1]\times\Omega_2$. On $\Omega_2$, $\tau_t$ is obtained by integrating the time dependent vector field $C_t$ where now we only consider $t\in[0,1]$.
Let $B^0$ denote the interior of $B$. 
Since $V$ is admissible,   we can lift (time-dependent) holomorphic vector fields on $B^0\times \Omega_1$ to $G$-invariant holomorphic vector fields on $B^0\times p\inv(\Omega_1)$. Hence $C_t$ lifts to a $G$-invariant holomorphic  vector field $A_t$ on $B^0\times p\inv (\Omega_1)$. Now we know that integrating $C_t$ on $\Omega_2$   lands us in $\Omega_1$ for $t\in[0,1]$. As in Lemma \ref{lem:flowexists} we can integrate $A_t$ for $t\in[0,1]$ and $x\in p\inv(\Omega_2)$ and we end up in $p\inv(\Omega_1)$. We thus have 
a homotopy whose value $\Theta$ at time 1 is a  $G$-biholomorphism of $p\inv(\Omega_2)$ which covers $\tau=\phi\circ\psi\inv$. Then $\Theta\circ\Psi$ is a $G$-biholomorphism inducing $\phi$ sending a $G$-saturated neighbourhood $U_X$ of $x_0$ onto a $G$-saturated neighbourhood $U_V$ of $0\in V$.

\begin{proof}[Proof of Theorem \ref{thm:main2}]
Let $E$ denote the Euler vector field on $V$. 
Since $X$ is admissible   we can lift the vector field $r_*E$ to a $G$-invariant vector field $A$ on $X$. Recall the $G$-equivariant flows $\eta_t$ of $E$ on $V$ and $\psi_t$ of $A$ on $X$. Let $X_u$ and $V_u$ be as before, $u>0$. Perhaps modifying $\phi$ by composition with an element of $\Aut_\ql(Q')\subset\Aut(Q_V)$, we can find a $G$-biholomorphism $\Phi\colon U_X\to U_V$   inducing $\phi$ as above. Then there is a $t\in\R$ such that $\psi_t(X_u)\subset U_X$ and $\eta_t(V_u)\subset  U_V$. The composition $\eta_{-t}\circ\Phi\circ\psi_t$ is a $G$-biholomorphism of $X_u$ with $V_u$ which induces $\phi$. Hence $X$ and $V$ are locally $G$-biholomorphic over a common quotient and we can apply Theorem \ref{thm:main1} or \cite[Corollary 14] {KLS}.
\end{proof}

\section{Small representations}\label{sec:small}

\noindent
Suppose that we have a strata preserving biholomorphism $\tau\colon Q_X\to Q_V$ as in Theorem \ref{thm:main2}. We know that $X$ and $V$ are $G$-equivariantly biholomorphic if $V$ is  large. In this section we investigate ``small''   $G$-modules $V$ which are not  large and see if we can still prove that $X$ and $V$ are $G$-equivariantly biholomorphic.
The proof of Theorem \ref{thm:main2} goes through if we can establish the following two statements where $V=V^G\oplus V'$ and $Q'=V'\sl G$.
\begin{enumerate}
\item[$(*)$]  Let $\phi$ be a germ  of a strata preserving automorphism near  the origin of $Q'$ and let $\phi_t=t\inv\circ\phi\circ t$. Then $\lim\limits_{t\to 0}\phi_t$ exists.
\item[$(**)$] Let $B$ be a holomorphic vector field on $Q'$ which preserves the strata, that is, $B(s)\in T_s(S)$ for every $s\in S$, where $S$ is any stratum of $Q'$. Then $B$ lifts to a $G$-invariant holomorphic vector field on $V'$.
\end{enumerate}

\begin{remark}\label{rem:samed}
Suppose that the minimal homogeneous generators of $\O_\alg(V')^G$ have the same degree. Then $\phi_0=\phi'(0)$ exists.
\end{remark}

The following theorem is one of the results in \cite{Jiang}.

\begin{theorem}
Suppose that $\dim Q\leq 1$. Then $X$ and $V$ are $G$-biholomorphic.
\end{theorem}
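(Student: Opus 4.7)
The plan is to verify the two statements $(*)$ and $(**)$ from the discussion preceding the theorem, since the text explicitly guarantees that once these hold the proof of Theorem~\ref{thm:main2} goes through unchanged. Writing $V = V^G\oplus V'$ and $Q_V = V^G\times Q'$, the constraint $\dim V^G + \dim Q' \leq 1$ leaves exactly three configurations: (a) $V^G = 0$ with $Q'$ a point; (b) $V^G\cong\C$ with $Q'$ a point; and (c) $V^G = 0$ with $\dim Q' = 1$. In cases~(a) and~(b) the quotient $Q'$ is a point, so $\Aut_\ql(Q')$ is trivial and there are no nonzero holomorphic vector fields on $Q'$; both $(*)$ and $(**)$ are therefore vacuous. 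Moreover, in case~(b) the germ $\tau = \phi\circ\psi\inv$ produced in the proof of Theorem~\ref{thm:main2} is already the identity on $Q_V = V^G$, so no further argument is needed.

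The substantive case is~(c). Here $Q'$ is a normal $1$-dimensional affine variety equipped with an algebraic $\C^*$-action fixing the origin, hence $Q'\cong \C$ with $\C^*$ acting by a single positive weight $d$; fix a coordinate $q$ on $Q'$ so that $t\cdot q = t^d q$, and fix a homogeneous invariant $\tilde q\in\O_\alg(V')^G$ of degree $d$ realising this identification. For $(*)$, any germ $\phi$ of a strata-preserving automorphism of $Q'$ at $0$ is just a local biholomorphism of $\C$ fixing the origin, so $\phi(q) = a_1 q + a_2 q^2 + \cdots$ with $a_1\neq 0$, and a direct computation gives
\[
\phi_t(q) = t\inv\cdot\phi(t\cdot q) = t^{-d}\phi(t^d q) = a_1 q + a_2 t^d q^2 + a_3 t^{2d} q^3 + \cdots,
\]
whose limit as $t\to 0$ is the quasilinear map $q\mapsto a_1 q$; this also matches the sufficient condition in Remark~\ref{rem:samed}. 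For $(**)$, the origin $\{0\}\subset Q'$ is a Luna stratum, so any strata-preserving holomorphic vector field $B$ on $Q'$ vanishes at $0$ and therefore has the form $B = qg(q)\,\pt/\pt q$ for some holomorphic $g$. Since the Euler vector field $E$ on $V'$ is $G$-invariant and $r_*(E) = dq\,\pt/\pt q$, the field $A := d\inv g(\tilde q)\,E$ is a $G$-invariant holomorphic lift of $B$ to $V'$.

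With $(*)$ and $(**)$ confirmed in every sub-case, the argument of Theorem~\ref{thm:main2} runs verbatim and produces a $G$-biholomorphism $X\to V$. The only step that requires genuine attention is the verification of the two abstract conditions in the single nontrivial sub-case, and the one-dimensionality of $Q'$ reduces each to an elementary power-series or Euler-field calculation. The main obstacle, if any, is simply the bookkeeping that ensures case~(c) is genuinely handled by the reduction outlined in Section~\ref{sec:small}; once that is granted the rest is essentially routine.
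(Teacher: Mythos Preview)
Your proof is correct and follows essentially the same route as the paper. The only differences are organizational: the paper disposes of $\dim Q=0$ by a direct appeal to Luna's slice theorem, and handles the one-stratum $\dim Q=1$ case (your case~(b)) by observing that Proposition~\ref{prop:nearfixedpoints} already produces a global $G$-biholomorphism---since every closed orbit is a fixed point, any $G$-saturated neighbourhood of $X^G$ is all of $X$. You instead fold both of these into the $(*)$/$(**)$ framework by noting they are vacuous when $Q'$ is a point; this is fine, though your remark that ``no further argument is needed'' in case~(b) tacitly uses the same fact that the saturated neighbourhood is everything. Your treatment of the two-stratum case~(c) is identical to the paper's: verify $(*)$ via Remark~\ref{rem:samed} and verify $(**)$ by lifting $qg(q)\,\pt/\pt q$ to a holomorphic multiple of the Euler field.
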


\begin{proof}
The case $\dim Q=0$ is an immediate consequence of Luna's slice theorem, so let us assume that $\dim Q=1$. Then $\O_\alg(V)^G$ is normal of dimension one, hence regular, and it is graded. Thus $\O_\alg(V)^G=\C[f]$, where $f$ is homogeneous and $Q\simeq\C$. First suppose that $Q$ has one stratum. Then the closed orbits in $V$ are the fixed points and   Proposition \ref{prop:nearfixedpoints} gives the required biholomorphism. The remaining case is where the strata of $Q$ are $\C\setminus\{0\}$ and $\{0\}$.  Then $(*)$ follows from Remark \ref{rem:samed}. As for $(**)$, our vector field is of the form $h(z)z\pt/\pt z$ where $h(z)$ is holomorphic. The vector field   lifts to an invariant holomorphic function times the Euler vector field on $V$.
\end{proof}

\begin{theorem}\label{thm:sl2}
Suppose that $G=\SL_2(\C)$. Then $X$ and $V$ are $G$-biholomorphic.
\end{theorem}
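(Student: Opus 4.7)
The strategy is to verify the two conditions $(*)$ and $(**)$ stated at the beginning of this section for every $\SL_2(\C)$-module $V' = \bigoplus_{i} R_{d_i}$ (with $d_i \geq 1$, so that $(V')^G = 0$), where $R_d$ denotes the irreducible representation of dimension $d+1$. Once these are in hand, the proof of Theorem \ref{thm:main2} transplants verbatim: $(**)$ substitutes for admissibility in lifting both the Euler vector field and the time-dependent vector field $C_t$ to $G$-invariant holomorphic vector fields, while $(*)$ allows the modification of $\phi$ by an element of $\Aut_\ql(Q')$ needed to reach the identity through a homotopy.

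First I would reduce to a verification at the origin of $Q'$. Since $(*)$ and $(**)$ are local over $Q'$, Luna's slice theorem lets us replace a neighbourhood of any closed orbit $G\cdot v$ by the quotient of the slice representation of $G_v$. For an $\SL_2(\C)$-module, the possible stabilisers of closed orbits other than $\SL_2(\C)$ itself are the normaliser $N(T)$ of a maximal torus, the torus $T \cong \C^*$, or a finite subgroup. In each of these cases the slice is a module over a proper reductive subgroup, and $(*)$, $(**)$ are either trivial (finite $H$) or follow from the toric case by a direct weight-vector computation as in Remark \ref{rem:generic}.

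Next I would verify $(*)$ and $(**)$ for $V'$ viewed as an $\SL_2$-module near its origin, the only remaining case. For $(*)$, the easy situation is the one covered by Remark \ref{rem:samed}, namely when all minimal generators of $\O_\alg(V')^G$ have a common degree: this already covers $V' = R_1^{\oplus k}$, where all generators are the degree-two symplectic pairings. For the remaining cases I would combine the classical description of the $\SL_2$-invariants via transvectants with the fact that a strata-preserving germ must carry the nullcone and the lower-dimensional Luna strata to themselves, in order to deduce the vanishing of the Taylor coefficients of $\phi$ needed to make $\lim_{t\to 0}\phi_t$ exist. For $(**)$, I would use the classical description of the module of $\SL_2$-equivariant polynomial maps $V' \to V'$ (concomitants) to produce a set of $\SL_2$-invariant polynomial vector fields on $V'$ that surjects onto strata-preserving derivations of $\O(Q')$.

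The main obstacle will be $(**)$ in the cases where $V'$ is not admissible in the sense of \cite{Schwarz2014}, so that the invariant vector fields do not surject onto all of $\A(Q')$. The crucial point is that we only need to lift derivations tangent to every Luna stratum, a strictly weaker condition than admissibility. The obstruction to such a restricted lifting lives in an $\O_\alg(V')^G$-module that can be computed explicitly for each small $\SL_2(\C)$-module using the symbolic method, and I would verify case by case that it vanishes under the strata-tangency hypothesis. With $(*)$ and $(**)$ established in all cases, the closing argument of the proof of Theorem \ref{thm:main2} (constructing the flows $\psi_t$ and $\eta_t$ and invoking Theorem \ref{thm:main1}) carries through without modification and yields the desired $G$-biholomorphism $X \to V$.
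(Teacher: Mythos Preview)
Your proposal has a genuine gap: you set out to verify $(*)$ and $(**)$ for \emph{every} $\SL_2(\C)$-module $V'$ with $(V')^G=0$, but this is an infinite family, and nothing in your outline reduces the problem to finitely many cases. The paper's proof hinges on exactly such a reduction: by \cite[Theorem 11.9]{Schwarz1995} there are only finitely many $\SL_2(\C)$-modules $V'$ (with $(V')^G=0$) that fail to be large, namely $kR_1$ for $1\le k\le 3$, $R_2$, $2R_2$, $R_2\oplus R_1$, $R_3$, $R_4$. For all other $V'$ the module is large, hence admissible, and Theorem~\ref{thm:main2} applies directly; one never needs $(*)$ or $(**)$ there. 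The eight small cases are then dispatched individually, each with a one- or two-line argument (orthogonality for $2R_2$ and $R_4$, the $\GL_3$-action for $3R_1$, explicit generators for $R_2\oplus R_1$, and $\dim Q'\le 1$ for the rest).

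Your slice-theorem reduction does not rescue the situation. Passing to slice representations at closed orbits with stabiliser $T$ or $N(T)$ trades one infinite family for another: the slice modules over $T$ form an infinite collection, and the claim that $(*)$ and $(**)$ ``follow from the toric case by a direct weight-vector computation as in Remark~\ref{rem:generic}'' is a non sequitur, since that remark concerns largeness, not the lifting of strata-preserving vector fields. Moreover, $(*)$ is by definition a statement about the origin of $Q'$, so the localisation step adds nothing there. In short, the missing ingredient is the finiteness of the non-large list; once you invoke it, the remaining work is the concrete, module-by-module verification the paper carries out, not the general symbolic-method argument you sketch.
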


\begin{proof}
Let $R_d$ denote the representation of $G$ on $S^d\C^2$. Then the $G$-modules $V'$ where $(V')^G=0$ and $V'$ is  not  large are \cite[Theorem 11.9]{Schwarz1995} 
\begin{enumerate}
\item $kR_1$, $1\leq k\leq 3$.
\item $R_2$, $2R_2$, $R_2\oplus R_1$.
\item $R_3$, $R_4$.
\end{enumerate}
In all cases the quotient is $\C^k$ for some $k\leq 3$.
The cases $R_1$, $2R_1$, $R_2$, $R_3$ have quotient of dimension at most 1, hence they present no problem. Suppose that $V'=3R_1$. Then the generating invariants are determinants of degree 2, so we have $(*)$. Let $z_{ij}$ be the variable on $Q'=\C^3$ corresponding to the $i$th and $j$th copy of $\C^2$. Then the strata preserving vector fields are generated by the $z_{ij}\pt/\pt z_{k\ell}$. Thus we have 9 generators. But we have a canonical action of $\GL_3(\C)$ on $V'$ commuting with the action of $G$ and the image of $\lie {gl}_3(\C)$ is the span of the 9 generators. Hence we have $(**)$. For the case of $2R_2$ we have $(*)$ because the generators are polynomials of degree 2 and we have $(**)$ because $2R_2$ is an orthogonal representation \cite[Theorems 3.7 and 6.7]{Schwarz1980}. 

Suppose that $V'=R_4$. Then $V'$ is orthogonal, so $(**)$ holds. The quotient $Q'$ is isomorphic to the quotient of $\C^2$ by $S_3$, and it is known that strata preserving automorphisms have local lifts  \cite{Lyashko}, \cite[Theorem 5.4]{KrieglTensor}, hence we certainly have $(*)$. Finally, there is the case $V'=R_2\oplus R_1$. Then there are generating invariants homogeneous of degrees $2$ and $3$ and the zeroes of the degree $3$ invariant define the closure of the codimension one stratum. Thus we may think of $Q'$ as $\C^2$ with coordinate functions $z_2$ and $z_3$ where $z_i$ has weight $i$ for the action of $\C^*$. A strata preserving $\phi$ has to send $z_3$ to a multiple of $z_3$ (and fix the origin), so that $\phi=(\phi_2,\phi_3)$ where $\phi_3(z_2,z_3)=\alpha(z_2,z_3)z_3$. It follows easily that $\phi_0$ exists and we have $(*)$. The strata preserving vector fields must all vanish at the origin and preserve the ideal of $z_3$, so they are generated by $z_3\pt/\pt z_3$, $z_2\pt/\pt z_2$ and $z_3\pt/\pt z_2$. Since $V'$ is self dual, we can change the differentials of the generators $f_2$ and $f_3$ into invariant vector fields $A_2$ and $A_3$, and one can see that our three strata preserving vector fields below are in the span of the images of $A_2$, $A_3$ and the Euler vector field. Hence we have $(**)$.
\end{proof}

\bibliographystyle{amsalpha}
\bibliography{Oka.paperbib}

\end{document}